\renewcommand\P{{\mathbb{P}}}
\newcommand\E{{\mathbb{E}}}
\theoremstyle{plain}
 \newtheorem{theorem}{Theorem}[section]
 \newtheorem{conjecture}[theorem]{Conjecture}
 \newtheorem{problem}[theorem]{Problem}
 \newtheorem{lemma}[theorem]{Lemma}
\newtheorem{remark}[theorem]{Remark}
\theoremstyle{definition}
\begin{document}
\title{Packing loose Hamilton Cycles}
\author{Asaf Ferber}
\author{Kyle Luh}
\author{Daniel Montealegre}
\author{Oanh Nguyen}
\maketitle

\begin{abstract}
A subset $C$ of edges in a $k$-uniform hypergraph $H$ is a \emph{loose Hamilton cycle}
if $C$ covers all the vertices of $H$ and there exists a cyclic ordering of these vertices such that the
edges in $C$ are segments of that order and such that every two consecutive
edges share exactly one vertex. The binomial random $k$-uniform
hypergraph $H^k_{n,p}$ has vertex set $[n]$ and an edge set $E$
obtained by adding each $k$-tuple $e\in \binom{[n]}{k}$ to $E$ with
probability $p$, independently at random.

Here we consider the problem of finding edge-disjoint loose Hamilton
cycles covering all but $o(|E|)$ edges, referred to as the
\emph{packing problem}. While it is known that the threshold
probability for the appearance of a loose Hamilton cycle in $H^k_{n,p}$
is $p=\Theta\left(\frac{\log n}{n^{k-1}}\right)$, the best known
bounds for the packing problem are around $p=\text{polylog}(n)/n$. Here we
make substantial progress and prove the following asymptotically
(up to a polylog$(n)$ factor) best possible result: For $p\geq
\log^{C}n/n^{k-1}$, a random $k$-uniform hypergraph $H^k_{n,p}$ with high
probability contains
$N:=(1-o(1))\frac{\binom{n}{k}p}{n/(k-1)}$ edge-disjoint loose
Hamilton cycles.

Our proof utilizes and modifies the idea of ``online sprinkling"
recently introduced by Vu and the first author.
\end{abstract}

\section{Introduction}
For $k \in \mathbb{N}$, the binomial random $k$-uniform hypergraph
$H^k_{n,p}$ consists of a vertex set
$V(H^k_{n,p})=[n]:=\{1,\ldots,n\}$ and an edge set $E(H^k_{n,p})$
which is obtained by adding each $k$-tuple $e \in {[n] \choose
k}$ to $E(H^k_{n,p})$ independently with probability $p$. Note that
for $k=2$, the $H^2_{n,p}$ is the standard binomial random
graph $G_{n,p}$.

The binomial random hypergraph model has introduced many natural
problems analogous to those of the binomial random graph model.
However, in the hypergraph setting, completely new techniques are
often required. In this paper we utilize and modify a new approach
due to Vu and the first author \cite{FerberV2015} to deal with the following
problem (we discuss the precise definition of Hamilton cycles in hypergraphs below).
\begin{problem}Given a hypergraph $H$ with $m$ edges, is it possible
to find a collection of edge-disjoint Hamilton cycles in $H$
which covers all but $o(m)$ edges of $H$?
\end{problem}

The problem of packing Hamilton cycles in random graphs is well
studied and is in fact completely solved (see e.g.
\cite{frieze2005packing}, \cite{knox2015edge},
\cite{krivelevich2012optimal} and the references therein). Moreover, in a
recent paper \cite{ferber2015packing}, an asymptotically optimal
solution to the \emph{directed} random graph model is also given.
Therefore, it is somewhat surprising that for the hypergraph case
so little is known.

For a hypergraph $H=H^k_{n,p}$, there is more flexibility in the
definition of a Hamilton cycle.  Letting $1 \leq \ell \leq k-1$,
a subset $C$ of edges in $H$ is a \emph{type-$\ell$ Hamilton cycle} if $C$ covers all the vertices of $H$ and there exists a cyclic ordering of these vertices such that the edges in $C$ are segments
of that order and such that every two consecutive edges share
exactly $\ell$ vertices. In this work, we study \emph{loose
Hamilton cycles} (or \emph{loose cycles} for brevity) which are type-$\ell$ Hamilton cycles with $\ell =1$.  For a loose cycle, let $e_1, e_2, \dots$ indicate its edges in an ordering induced by the ordering
of the vertices. It follows from the definition that the sets
$e_{i-1}\setminus e_i$ are disjoint sets of size $k-1$ that cover
the entire vertex set. Therefore, $k-1$ divides $n$ is a necessary condition
for the existence of a loose cycle. Extending a result of Frieze \cite{frieze2010loose}, Dudek and Frieze showed in \cite{dudek2011loose} that if $2(k-1)$ divides $n$ and $p = \omega\log n/n^{k-1}$
where $\omega = \omega_n$ is any function tending to infinity with
$n$ then whp \footnote{A sequence of events $(\mathcal E_n)$ is said to occur with
high probability (whp) if $\lim _{n\to\infty} \P(\mathcal E_n)= 1$.
In this paper, all asymptotic notation assumes
that the parameter $n$ tends to infinity.}
there exists a loose cycle. In \cite{dudek2012optimal}, Dudek, Frieze, Loh, and Speiss extended the result to $(k-1)|n$ (for a shorter proof see
\cite{ferber2015closing}). However, the packing problem for
Hamilton cycles seems to be more difficult.  In
the paper of Frieze and Krivelevich \cite{frieze2012packing}, even after restricting the range of $\ell$, some
effort is required to show that an edge-disjoint packing exists for
$p \gg \log^2 n/ n$.  This bound does not address the dependence on
$k$ and is significantly larger than the threshold at which a
Hamilton cycle appears.

In what follows, we close the gap for the case of loose cycles by
showing that up to a polylog term, the above lower bound in $p$ for
the appearance of a loose cycle, also guarantees the existence of an
asymptotically optimal packing. In particular, we prove the
following.

\begin{theorem} \label{main-thm}
Let $k\ge 3$ be an integer.
Assume that $(k-1)|n$ and $p \geq\frac{\log^{2k+2} n}{n^{k-1}}$. Then whp $H^{k}_{n, p}$ contains
$\frac{{n \choose k}p}{n/(k-1)}(1+o(1))$ edge-disjoint
loose Hamilton cycles.
\end{theorem}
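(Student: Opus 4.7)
The plan is to deploy the online sprinkling framework of Ferber--Vu: set $N := (1-o(1))\frac{\binom{n}{k}p}{n/(k-1)}$, and expose the edges of $H^k_{n,p}$ in $N$ successive rounds rather than all at once. Choose $q>0$ with $1-(1-q)^N = p$, so that in each round exposing every not-yet-revealed edge independently with probability $q$ reproduces the law of $H^k_{n,p}$; note that $q = \Theta(1/n^{k-1})$, far below the threshold for a single loose Hamilton cycle. In round $i$, the aim is to find a loose Hamilton cycle $C_i$ that is edge-disjoint from $C_1,\dots,C_{i-1}$, using only edges already exposed by the end of round $i$ and not yet spent on earlier cycles.

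Since a single round's edges are too sparse on their own to contain a loose Hamilton cycle, I would maintain a growing \emph{reservoir} $R_i$ of edges that were exposed in rounds $<i$ but not used in any $C_j$. By round $i$, the reservoir together with the newly exposed set $H_i$ should behave like a random hypergraph of density $\Omega(\log^{2k+2}n / n^{k-1})$, comfortably above the Hamilton threshold of Dudek--Frieze. The key feature is that $H_i$ is independent of the whole history: this ``fresh'' randomness is what allows us to rerun a single-cycle existence argument conditionally on the past.

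To actually produce $C_i$, I would first set aside, in a preprocessing stage, a small fraction of the random edges to build a robust \emph{absorbing structure}: a carefully chosen substructure on a small vertex subset that can swallow any tiny set of leftover vertices into a loose path. The absorber must be reusable across all $N$ rounds, so its construction needs enough redundancy that deleting $o(|E|)$ of its edges along the way still leaves it functional. Then, in each round, I would run a P\'osa-style rotation-and-extension argument on $H_i \cup R_i$ to build an almost-spanning loose path outside the absorber, absorb the uncovered vertices using a small sliver of the absorbing structure, and close the path into a loose Hamilton cycle using one further edge of $H_i$.

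The main obstacle is controlling the conditional distribution of the reservoir $R_i$ after the (somewhat adversarial) removal of the edges of $C_1,\dots,C_{i-1}$: because each $C_j$ depends on the randomness of round $j$, the edge-set of $R_i$ is not independent in any naive sense. I would handle this via a martingale concentration argument (Freedman's inequality), showing that every relevant local structure count -- codegrees, short-path counts, extension counts used by the rotation lemma -- stays close to its expectation uniformly across the $N$ rounds with high probability. A union bound over rounds then reduces the per-round cycle-finding step to a deterministic statement about pseudo-random hypergraphs, and the $\log^{2k+2} n$ slack in $p$ is exactly what is needed to absorb both the union bound and the polylog losses in the rotation/absorption arguments.
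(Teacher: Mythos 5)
Your proposal shares the paper's outermost shell (multi-round exposure with per-round density $q\approx p/N$) but then diverges completely: the paper never forms a reservoir, never builds an absorber, and never runs rotation--extension. Instead, each round builds a long loose path \emph{greedily and adaptively} -- at step $j$ it queries only the $k$-tuples extending the current endpoint, with a tiny probability $q$, repeating up to $A_j$ times until one succeeds -- and then closes the path by sprinkling a fresh near-threshold random hypergraph on the $\alpha n=n/\mathrm{polylog}(n)$ leftover vertices plus a dummy vertex, invoking the Dudek--Frieze loose-Hamiltonicity theorem \cite{dudek2011loose} as a black box on that small auxiliary hypergraph. The whole process is then coupled into a single $H^k_{n,p}$ by tracking the total probability mass $Q_e$ each $k$-tuple accumulates and showing $Q_e\le p$ for all $e$ via McDiarmid's inequality. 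The point of this design is precisely to avoid the object your plan is built around: at no stage does the paper need to understand the conditional distribution of ``leftover'' edges after earlier cycles have been deleted.

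Two steps of your plan are genuine gaps rather than routine work. First, ``P\'osa-style rotation--extension'' is a graph technique; there is no rotation lemma for loose paths in $k$-uniform hypergraphs at density $\mathrm{polylog}(n)/n^{k-1}$, and the known proofs of loose Hamiltonicity at the threshold do not use rotation at all (they reduce to perfect matchings and apply Johansson--Kahn--Vu-type results). So the core per-round cycle-finding step rests on a tool that does not exist, and inventing it would be a substantial paper in itself. Second, the pseudo-randomness of $R_i$ is not a matter of applying Freedman's inequality: by the late rounds a $(1-o(1))$-fraction of the edges at a typical vertex have been consumed by earlier cycles, so the relevant local counts are \emph{not} close to their unconditional expectations -- the depletion is of the same order as the mean -- and the cycles $C_1,\dots,C_{i-1}$ are chosen by an algorithm that could in principle concentrate its removals on particular vertices or codegree pairs. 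Ruling that out requires showing the cycle-finding procedure spreads its edge usage evenly, which is circular unless the procedure is designed with that in mind; this is exactly where the graph-case packing papers expend most of their effort. The reusable absorber surviving $\mathrm{polylog}(n)$ rounds of adversarial deletions at this density is likewise asserted rather than constructed. In short, the strategy is not absurd, but as written it defers all of the difficulty to unavailable or unestablished lemmas, whereas the paper's adaptive-query-plus-coupling design dissolves these difficulties at the outset.
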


\begin{remark}
The number of packed loose cycles $\frac{{n \choose k}p}{n/(k-1)}(1+o(1))$ is optimal in the sense that in expectation, there are ${n\choose k} p$ edges and there are $n/(k-1)$ edges in each loose cycle.
\end{remark}

The proof of the main theorem builds on the idea of ``online
sprinkling" introduced by the first author and Vu in
\cite{FerberV2015}.  Their idea is to embed a desired structure
using a randomized algorithm while simultaneously exposing the
random hypergraph.  This general strategy embeds a particular
structure in every round such that the remaining unexposed parts of
the hypergraph are sufficiently random to iterate the algorithm. In their work, the structure is perfect matchings. For the problem of loose cycles, we institute ``online sprinkling" in a
different way.  In every step of our
algorithm, the use of this sprinkling generates a long loose path
that includes the majority of the vertices.  To complete this
embedding into a cycle, we construct an auxiliary hypergraph (as in
another work of the first author \cite{ferber2015closing}) and apply the known results about the
appearance of a loose cycle.

Our proof technique does not apply to type-$l$ Hamilton cycles for $l$ larger than 1. The reason is that on a subset of roughly $\alpha n$ vertices we expose all the possible $k$-tuples with the ``correct" probability $p=\frac{\text{polylog}(n)}{(\alpha n)^{k-\ell}}$ (that is, the threshold for appearance of a type-$\ell$ cycle) in order to close only one cycle. This procedure is too wasteful and fortunately for $\ell=1$ we only need to pay a $\text{polylog}(n)$ factor, but for larger $\ell$ we get a redundant polynomial of $n$. One way to overcome this problem is to find a way to reuse the edges we exposed in order to close other paths. It would be very interesting to see such a result.
\begin{conjecture}
Let $k>l\ge 2$.
There exists a constant $C$ such that if $(k-l)|n$ and $p \geq\frac{\log^{C}n}{n^{k-l}}$, then whp $H^{k}_{n, p}$ contains
$\frac{{n \choose k}p}{n/(k-l)}(1+o(1))$ edge-disjoint
type-$l$ Hamilton cycles.
\end{conjecture}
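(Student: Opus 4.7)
The plan is to adapt the online sprinkling framework of Ferber and Vu used for the $\ell=1$ case of Theorem~\ref{main-thm}, but to restructure the closing phase so that a single exposed reservoir of edges can be reused to close many paths simultaneously, thereby avoiding the polynomial-in-$n$ loss the authors describe in the paragraph following Theorem~\ref{main-thm}. Concretely, I would split the total probability as $p = q_{\text{main}} + \sum_{i=1}^{N} p_i + q_{\text{close}}$, where $N = (1-o(1))\binom{n}{k}p(k-\ell)/n$ is the target number of cycles. The slice $q_{\text{main}}$ is used solely to guarantee a-priori pseudorandomness of $H^{k}_{n,p}$ (degree and codegree concentration, expansion, etc.), each $p_i$ is roughly $\log^{C'}n/n^{k-\ell}$ and drives round $i$ of the sprinkling, and $q_{\text{close}} = \log^{C''}n/n^{k-\ell}$ is reserved as a shared reservoir $H^{*}$ for closing all cycles simultaneously.

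In the iterative phase, round $i$ exposes a fresh slice $H_i \sim H^{k}_{n,p_i}$, independent of previous rounds and of $H^{*}$. Using only edges of $H_i$ not yet consumed by cycles $1, \ldots, i-1$, the algorithm builds a type-$\ell$ loose path $P_i$ covering all but a small residual set $R_i$ with $|R_i| = o(n)$. This can be arranged by a P\'osa-style rotation-and-extension argument adapted to type-$\ell$ paths, or by greedily completing a near-perfect matching in an auxiliary hypergraph as in \cite{ferber2015closing}. Because $p_i$ already sits above the threshold for existence of a long type-$\ell$ path, and because the edges consumed in previous rounds form a vanishingly small fraction of $H_i$, a martingale/switching argument shows that whp each round succeeds and a uniform lower bound on the number of unused available edges persists throughout the process.

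The crucial step is the simultaneous closing. Rather than closing $P_i$ on its own residual $R_i$ using freshly sampled randomness (which is exactly the wasteful step identified by the authors), I would set up an auxiliary packing problem inside $H^{*}$: for each $i$, choose a short type-$\ell$ path $Q_i$ joining the two endpoints of $P_i$ through $R_i$, with all $Q_i$ pairwise edge-disjoint. The expected edge budget of $H^{*}$ is $\binom{n}{k}q_{\text{close}} \sim n^{\ell}\log^{C''} n$, while the total number of edges required across all closing paths is at most $N\cdot |R_i|/(k-\ell) = \widetilde{O}(n^{\ell-1})\cdot o(n)$, so the budget is generously sufficient in expectation. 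The non-trivial part is showing a rainbow/list existence statement: treat each edge $e \in H^{*}$ as being available for the indices $i$ whose closing constraints are compatible with the vertices of $e$, and show by a nibble or absorption argument that an edge-disjoint family $\{Q_i\}$ actually exists. Since the endpoints of $P_i$ and the residuals $R_i$ are close to uniformly distributed subsets of $[n]$ by the pseudorandomness in step~1, the availability pattern of each edge is nearly uniform across the $N$ indices, which is exactly the regime in which rainbow Hamilton/path packing tools in the style of K\"uhn and Osthus could be combined with the type-$\ell$ Hamilton cycle existence theorem applied to $H^{*}$.

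The main obstacle is precisely this rainbow-disjoint closing step. A naive union bound fails because the number of colour classes grows polynomially in $n$ and the short-path constraints for different cycles interact through shared vertices; the closing paths must be chosen \emph{coherently}, not independently. Supplying the necessary tool will likely require an edge-coloured or absorbing version of the Dudek--Frieze--Loh--Spiess type-$\ell$ Hamilton cycle theorem at probability $\log^{C}n/n^{k-\ell}$, or equivalently a "resilience with many colours" statement for type-$\ell$ paths in sparse random hypergraphs. Developing such a statement appears to be the genuine new ingredient needed to resolve the conjecture, in line with the authors' own observation that the key obstruction is the reuse of closing edges across many cycles.
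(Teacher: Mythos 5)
This statement is a \emph{conjecture} in the paper: the authors explicitly state that their online-sprinkling technique breaks down for $\ell\ge 2$ because the closing step exposes $k$-tuples at the local density $\text{polylog}(n)/(\alpha n)^{k-\ell}$ on a residual set of size $\alpha n$ just to close a single cycle, and that resolving the conjecture would require a way to reuse closing edges across many cycles. Your proposal correctly diagnoses this same obstruction and proposes the same remedy in outline (a shared closing reservoir $H^{*}$), but it does not constitute a proof. The entire weight of the argument rests on the ``rainbow-disjoint closing step,'' and you yourself state that the tool needed there --- an edge-coloured or absorbing version of the type-$\ell$ Hamilton cycle theorem at density $\log^{C}n/n^{k-\ell}$ --- does not exist and would have to be developed. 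A plan whose pivotal lemma is acknowledged to be an open problem is not a proof of the conjecture.

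Moreover, the expectation count you offer in support of the closing step is misleading. You compare the global edge budget of $H^{*}$, namely $\binom{n}{k}q_{\text{close}}\sim n^{\ell}\log^{C''}n$, against the total number of closing edges needed, but the closing path $Q_i$ must live (essentially) inside the residual set $R_i$ of size $o(n)$. The expected number of reservoir edges contained in a fixed set of size $\alpha n$ is only $\Theta\bigl((\alpha n)^{k}q_{\text{close}}\bigr)=\Theta\bigl(\alpha^{k}n^{\ell}\log^{C''}n\bigr)$, and the local density of $H^{*}$ restricted to $R_i$ is $q_{\text{close}}=\log^{C''}n/n^{k-\ell}$, which is a factor $\alpha^{k-\ell}$ \emph{below} the threshold $\log n/(\alpha n)^{k-\ell}$ needed for a spanning type-$\ell$ structure on $R_i$. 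So a single shared reservoir at the stated density cannot even close one cycle, let alone all $N$ of them, unless the residual sets are arranged to overlap so heavily that the same locally dense patch is recycled --- and a back-of-the-envelope count shows that for $\ell\ge 2$ this forces $|R_i|=\Omega(n)$, which changes the architecture of the whole argument. This is precisely the quantitative obstruction the authors describe, and your proposal does not overcome it; it relocates it into an unproven rainbow packing statement.
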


\section{Proof of Theorem \ref{main-thm}}

\subsection{Outline}
We begin by working under the assumption that $\frac{\log^{2k+2} n}{n^{k-1}}\le p\le
\frac{2\log^{2k+2} n}{n^{k-1}}$.  We show how to remove this restriction at the end of the proof. Fix $\varepsilon>0$, our goal is to show that whp there exist at least
$(1-2\varepsilon)\frac{{n \choose k}p}{n/(k-1)}$ edge-disjoint loose cycles in $H=H^k_{n,p}$ when $n$ is sufficiently large.
The proof of the result is algorithmic in nature.  We introduce a randomized algorithm in Section \ref{algorithm} that will pack each loose Hamilton cycle one at a time as edges of the hypergraph are exposed.  Our algorithm consists of $N=
(1-\varepsilon)\frac{{n \choose k}p}{n/(k-1)} = O(\text{polylog}(n))$ rounds, each of which
produces one loose Hamilton cycle. Each round is further
divided into several steps, where in each such time step, except the last one, the
algorithm tries to extend a current loose path by one edge. In
particular, the algorithm repeatedly
tries to color certain $k$-tuples with an appropriate edge probability, independently at random, until at least one
$k$-tuple is colored (it is allowed to expose the same $k$-tuple twice). A successful coloring also indicates a successful exposure of an edge in the hypergraph. The coloring is simply a tool to expose edges while labeling the successfully exposed edge with the time step.  A round is terminated by closing the path to create a cycle.  This is done by exposing and identifying a Hamilton cycle in an appropriately defined auxiliary hypergraph.

In Section \ref{propH'}, the analysis will show that whp the algorithm outputs the correct number of cycles and that the cycles are edge-disjoint.  As the exposure of the underlying hypergraph is concurrent with the running of our algorithm and we allow multiple exposures of any $k$-tuple (meaning a $k$-tuple that was not successfully colored can be considered again in the next step), it is also necessary to show that our exposure process generates a hypergraph that can be embedded in $H$. We use a coupling lemma to show that such an embedding exists if the random weight that every $k$-tuple accumulates during the algorithm is bounded by $p$.  We then use a concentration result to guarantee the latter.

\subsection{The algorithm} \label{algorithm}

Let $\omega_n\le \log n$ be a sequence tending to infinity with $n$ to be chosen later.
The following is a randomized algorithm which will generate $(1-2\varepsilon)\frac{{n \choose k}p}{n/(k-1)}$
loose cycles whp for sufficiently large $n$.  We divide our algorithm into $N= (1-\varepsilon)\frac{{n \choose k}p}{n/(k-1)}$ independent rounds. In each round $1\le i\le N$, we perform $K+1$ steps where $K=\Theta(n)$ is a deterministic number to be chosen later. We denote step $j$ in round $i$ by step $(i,j)$ where $1\le j\le K+1$. One successful round will generate a loose Hamilton cycle.

For each round $i=1, \dots, N$, we proceed as follows.

{\bf Generating the Long Path.}

{\bf Step 1.} We randomly assign color ${(i,1)}$ to each $k$-tuple of all the $n\choose k$ tuples independently with probability $n^{-k}$.
We repeat this coloring procedure until at least one $k$-tuple is colored or at most $\omega_n$ times.  If we reach $\omega_n$ and no $k$-tuple has been successfully colored, the entire round fails and we move to the next round. Otherwise, let $T^{i}_1$ be the number of times we perform this coloring procedure, then $T^{i}_1$ is a random variable taking values between 1 and $\omega_n$. At time $T^{i}_{1}$, among the $k$-tuples that are colored, we choose one uniformly at random and call it edge $E^{i}_1$. This is the first edge in our path. Choose an arbitrary ordering on $E^{i}_{1}$, say $E^{i}_{1} = (v^{i}_{1}, \dots, v^{i}_k)$. \\

{\bf Step j ($2\le j\le K$).} At this point, we have already obtained $j-1$ (ordered) edges $E^{i}_s = (v^{i}_{(s-1)(k-1)+1}, \dots, v^{i}_{s(k-1)+1})$ ($1\le s\le j-1$) forming a loose path. Let $R^{i}_{j}$ be the collection of all $k$-tuples whose intersection with the union of the above edges is exactly the last vertex $v^{i}_{(j-1)(k-1)+1}$. Randomly assign color ${(i,j)}$ to the $k$-tuples in $R^{i}_{j}$ independently with probability $q$ where $q$ is a deterministic number to be chosen later ($q$ will be roughly $p$ divided by some polylog$(n)$). We repeat this coloring procedure until at least one $k$-tuple is colored or at most $A_j$ times where $A_j$ is a deterministic number to be chosen later.  If we reach $A_j$ and no $k$-tuple has been successfully colored, the entire round fails and we move to the next round. Otherwise, let $T^{i}_j$ be the number of times we perform this coloring procedure, then $T^{i}_j$ is a random variable taking values between 1 and $A_j$. At time $T^{i}_j$, among the $k$-tuples that are colored, we choose one uniformly at random and call it edge $E^{i}_{j}$. Randomly choose an order on the last $k-1$ vertices of $E^{i}_{j}$, say $E^{i}_{j} = (v^{i}_{(j-1)(k-1)+1}, \dots, v^{i}_{j(k-1)+1})$. \\

{\bf Closing the Path.}

{\bf Step K+1.} We will choose $K$ so that the number of the remaining vertices is
\begin{equation}
 n-K(k-1)-1 = \alpha n \label{K-alpha}
\end{equation}
where $\alpha = o(1)$ some (deterministic) negative power of $\log n$ to be chosen later so that $2(k-1)|(\alpha n+1)$. Let $V^{i}$ be the collection of the remaining vertices. Let $v_0$ be a dummy vertex which will later be appropriately replaced by either the first point $v^{i}_1$ or the last point $v^{i}_{K(k-1)+1}$ of the long path.

Let $R^{i}_{K+1}$ be the collection of all $k$-tuples $e$ in $[n]$ such that $e$ contains at least $k-1$ vertices in $V^{i}$ and $e\setminus V^{i}$, if not empty, is either $\{v^{i}_1\}$ or $\{v^{i}_{K(k-1)+1}\}$. Each $k$-tuple in $R^{i}_{K+1}$ is assigned the color $(i, K+1)$ randomly and independently with probability $r = \omega_n\log n / \alpha^{k-1}n^{k-1}$.

Having colored the $k$-tuples in $R^{i}_{K+1}$, we can generate the edges in the auxiliary $k$-uniform hypergraph $G^{i}:= V^{i}\cup\{v_0\}$. A $k$-tuple $e'$ in $G^{i}$ is colored if one of the following holds
\begin{itemize}
\item $e'\subset V^{i}$ and  $e'$ has color $(i, K+1)$,
\item $e'$ contains $v_0$ and either $\{v^{i}_1\}\cup e'\setminus\{v_0\}$ or $\{v^{i}_{K(k-1)+1}\}\cup e'\setminus\{v_0\}$ (or both) has color $(i, K+1)$.
\end{itemize}

Thus, in the random hypergraph $G^{i}$, each $k$-tuple appears independently with probability at least $r$. We find loose Hamilton cycles in $G^{i}$ with $v_0$ at the intersection of two consecutive edges, one of which inherits the color from an edge in $R^{i}_{K+1}$ that contains $v^{i}_1$ and the other inherits the color from an edge in $R^{i}_{K+1}$ that contains $v^{i}_{K(k-1)+1}$. If there exists at least one such cycle, we choose one at random. Otherwise, we repeat the coloring procedure in $R^{i}_{K+1}$ until at least one such cycle appears or at most $\omega_n$ times.  Let $T^{i}_{K+1}$ be the number of times we perform this coloring procedure. If we have repeated the procedure $\omega_n$ times and found no such cycle, the entire round fails and we move to the next round. Assume that the round does not fail at this step and we obtain a cycle as desired. That is, $v_0$ is contained in two consecutive edges $e'$ and $e''$ such that $E^{i}_{n/(k-1)}:=\{v^{i}_1\}\cup e'\setminus\{v_0\}$ and $E^{i}_{K+1}:=\{v^{i}_{K(k-1)+1}\}\cup e''\setminus\{v_0\}$ have color $(i, K+1)$. Replace the edges $e'$ and $e''$ of the cycle by $E^{i}_{n/(k-1)}$ and $E^{i}_{K+1}$ respectively and insert the long path obtained from step 1 to step $K$ between the two edges to form a loose Hamilton cycle on $[n]$. Return this loose cycle and move to the next round.

\subsection{Properties of the algorithm}\label{propH'}
In this section, we verify some properties of the above  algorithm which will complete the proof of Theorem \ref{main-thm} for $\frac{\log^{2k+2} n}{n^{k-1}}\le p\le
\frac{2\log^{2k+2} n}{n^{k-1}}$. At the end of this section, we will show how to remove this restriction.

\begin{lemma}\label{round-success}
Assume that
\begin{equation}
\sum_{j=2}^{K} \exp\left (-qA_j{{n-(j-1)(k-1)-1}\choose {k-1}}\right ) = o(1) \label{cond-Aj}
\end{equation}

Then whp, there are $N(1+o(1))$ successes among the $N$ rounds.
\end{lemma}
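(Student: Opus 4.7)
The plan is to show that each round of the algorithm fails with probability $o(1)$ and then exploit the independence of the $N$ rounds. First I would dispose of the $K+1$ steps of a single round separately.

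For Step $1$, a single attempt successfully colors at least one $k$-tuple with probability $1 - (1-n^{-k})^{\binom{n}{k}}$, which tends to the positive constant $1 - e^{-1/k!}$. Hence after $\omega_n$ independent attempts the probability that Step $1$ fails is $e^{-\Omega(\omega_n)} = o(1)$.

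For Step $j$ with $2 \le j \le K$, conditioning on the path produced so far, one checks that $|R^{i}_j| = \binom{n-(j-1)(k-1)-1}{k-1}$: the vertex $v^{i}_{(j-1)(k-1)+1}$ is forced to lie in the $k$-tuple while the remaining $k-1$ vertices must be chosen from the $n-(j-1)(k-1)-1$ vertices not yet used by the path (the path through $j-1$ edges has used exactly $(j-1)(k-1)+1$ vertices). The probability that all $A_j$ attempts at Step $j$ fail is therefore at most $(1-q)^{A_j|R^{i}_j|} \le \exp\bigl(-qA_j\binom{n-(j-1)(k-1)-1}{k-1}\bigr)$, so a union bound over $j$ together with the hypothesis \eqref{cond-Aj} bounds the probability that any of Steps $2,\dots,K$ fails by $o(1)$. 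For Step $K+1$, the auxiliary hypergraph $G^{i}$ has $\alpha n+1$ vertices in which every $k$-tuple appears independently with probability at least $r=\omega_n\log n/(\alpha n)^{k-1}$, a factor $\omega_n$ above the loose-Hamilton threshold from \cite{dudek2012optimal}. A short symmetry and counting argument shows that a single attempt produces a loose Hamilton cycle with the required color-inheritance through $v_0$ with probability bounded below by a positive constant, so after $\omega_n$ repetitions Step $K+1$ fails with probability $o(1)$.

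Combining the three bounds, each round fails with probability $o(1)$; since different rounds use independent randomness, the number of successful rounds stochastically dominates a sum of $N$ independent Bernoulli trials with success probability $1-o(1)$. Markov's inequality (applied with a slowly vanishing threshold) or a standard Chernoff bound then gives $N(1+o(1))$ successes whp. I expect the main technical obstacle to be Step $K+1$: while the density $r$ comfortably exceeds the loose-Hamilton threshold, one must verify that among the many cycles guaranteed in $G^{i}$ at least one has its two edges through $v_0$ labeled with colors inherited from $v^{i}_1$ and $v^{i}_{K(k-1)+1}$ respectively, which requires tracking the two independent contributions of rate $r$ that govern each $v_0$-edge of $G^{i}$.
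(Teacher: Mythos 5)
Your proposal follows essentially the same route as the paper: bound the failure probability of Step 1 directly, use the hypothesis \eqref{cond-Aj} with a union bound for Steps $2,\dots,K$, invoke the Dudek--Frieze loose-Hamilton threshold together with a symmetry argument (giving a constant per-attempt success probability, hence failure probability $o(1)$ after $\omega_n$ repetitions) for Step $K+1$, and finish with Markov's inequality on the number of failed rounds. The one place you leave details to ``a short symmetry and counting argument'' is exactly where the paper supplies the factors $1/(k-1)$ and $1/2$, so there is no gap of substance.
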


We will later choose $q$ appropriately so that by setting
\begin{equation}\label{set-Aj}
A_j = \frac{2\log n}{q{{n-(j-1)(k-1)-1}\choose {k-1}}} = \Theta\left (\frac{\log n}{q(n-j(k-1))^{k-1}}\right ),
\end{equation}

for all $2\le j\le K$, the $A_j$ are some (positive) power of $\log n$ and condition \eqref{cond-Aj} holds.

\begin{proof}
First, we claim that a round succeeds whp. Indeed, consider round 1 (say); step 1 fails if there is no colored $k$-tuple after $\omega_n$ coloring procedures on all $k$-tuples. This happens with probability $(1-n^{-k})^{\omega_n {n\choose k}}\le \exp\left (-n^{-k} {n\choose k} \omega_n\right) =o(1)$. Similarly, the probability for failure in one of the steps from 2 to $K$ is bounded from above by
\begin{equation}
\sum_{j=2}^{K}(1-q)^{A_j{{n-(j-1)(k-1)-1}\choose {k-1}}} \le \sum_{j=2}^{K} \exp\left (-qA_j{{n-(j-1)(k-1)-1}\choose {k-1}}\right ) = o(1)
\end{equation}
by assumption \eqref{cond-Aj}.

Finally consider step $K+1$. By the aforementioned result in Dudek and Frieze \cite{dudek2011loose}, whp there exists at least one loose cycle in $G^{1}$. By symmetry of vertices, with probability $1/(k-1)+o(1)$ there exists one loose cycle in $G^{1}$ with $v_0$ at the intersection of two consecutive edges. Conditioning on the appearance of such a cycle, with probability at least $1/2$, one of the two consecutive edges inherits the color from an edge containing $v^{1}_{1}$ and the other edge $v^{1}_{K+1}$. In other words, with probability at least $1/2k + o(1)$, there exists a loose cycle with the prescribed properties in the algorithm. Since we repeat the experiment $\omega_n$ times, with probability at least $1 - (1- \frac{1}{2k}+o(1))^{\omega_n} = 1 - o(1)$, there exists one loose cycle with the above property.

In summary, a round succeeds whp. Let $f_n^{-1}$ be an upper bound for the probability of failure in one round where $f_n\to \infty$ with $n$. Then the expected number of failures is $N/f_n$. By Markov's inequality, with probability at least $1 - f_n^{-1/2}$, the number of failures is less than $N/\sqrt{f_n}$. In that event, the number of successful rounds is $N - N/\sqrt{f_n} = N(1-o(1))$.
\end{proof}

\begin{lemma}\label{edge-disj}
The loose Hamilton cycles obtained from the successful rounds are edge-disjoint whp.
\end{lemma}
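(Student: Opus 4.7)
The plan is to show that for each $k$-tuple $e$ and each pair of distinct rounds $i \ne i'$, the probability that $e$ appears in both of the loose Hamilton cycles produced in those rounds is small enough to survive a union bound. Write $C_i$ for the cycle output by round $i$ (defined only when the round succeeds). By construction the random bits used in different rounds are independent, so $\P(e \in C_i \cap C_{i'}) \le \P(e \in C_i)\,\P(e \in C_{i'})$. Moreover, every edge of $C_i$ was assigned some color $(i,j)$ in round $i$, hence $\P(e \in C_i) \le \sum_{j=1}^{K+1} \P(e \text{ receives color } (i,j))$, and the task reduces to bounding this sum.

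Step $1$ contributes at most $\omega_n n^{-k}$ and step $K+1$ contributes at most $\omega_n r$, both of order $\text{polylog}(n)/n^{k-1}$ for the intended choices of $r$ and $\alpha$. The essential range is $2 \le j \le K$, where the key structural observation is that for any fixed round $i$, the $k$-tuple $e$ lies in $R^i_j$ for at most one value of $j$: if $e \in R^i_{j_0}$ then $e$ contains the last vertex $v^i_{(j_0-1)(k-1)+1}$, but this vertex remains on the (growing) path at every later step while being distinct from each subsequent last vertex $v^i_{(j-1)(k-1)+1}$, so $e \cap (\text{path at step } j)$ properly contains the new last vertex and $e \notin R^i_j$ for any $j > j_0$. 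Using that the coloring attempts at step $(i,j)$ are independent of the membership event $\{e \in R^i_j\}$ and that at most $A_j$ attempts are performed, one obtains
\[
\sum_{j=2}^{K} \P\bigl(e \text{ receives color } (i,j)\bigr) \;\le\; \sum_{j=2}^{K} \P(e \in R^i_j)\cdot qA_j \;\le\; \max_{2 \le j \le K} qA_j \;=\; O\!\left(\frac{\text{polylog}(n)}{n^{k-1}}\right),
\]
where the last equality uses the definition of $A_j$ in \eqref{set-Aj} and $\alpha$ a suitable negative power of $\log n$.

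Combining, $\P(e \in C_i) = O(\text{polylog}(n)/n^{k-1})$, and hence $\P(e \in C_i \cap C_{i'}) = O(\text{polylog}(n)/n^{2k-2})$. A union bound over the $\binom{N}{2} = O(\text{polylog}(n))$ pairs of rounds and the $\binom{n}{k} = O(n^k)$ choices of $e$ yields
\[
\P\bigl(\exists\, e,\, i \ne i' : e \in C_i \cap C_{i'}\bigr) \;=\; O\!\left(\frac{\text{polylog}(n)}{n^{k-2}}\right) \;=\; o(1),
\]
since $k \ge 3$ and $N = \text{polylog}(n)$. The main obstacle is the structural step-disjointness observation that $e \in R^i_j$ for at most one $j \in \{2,\ldots,K\}$; everything else is parameter-tracking that the algorithm's choices of $q$, $A_j$, $r$, and $\alpha$ are already designed to support.
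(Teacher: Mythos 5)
Your proof is correct and follows essentially the same route as the paper's: bound the per-round probability that a fixed $k$-tuple is ever colored by $\omega_n n^{-k} + \max_j qA_j + \omega_n r = O(\text{polylog}(n)/n^{k-1})$, using the fact that a tuple belongs to $R^i_j$ for at most one $j$, and then union-bound over the $\binom{N}{2}$ pairs of independent rounds and the $\binom{n}{k}$ tuples. The only difference is that you spell out why the events $\{e\in R^i_j\}$ are disjoint across $j$, which the paper merely asserts.
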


\begin{proof} By the description of the algorithm, in each round, a $k$-tuple is considered in step 1 and at most once more step between step 2 to step $K+1$. Therefore, the probability that a $k$-tuple is colored in each round is bounded by $p':=\omega_n n^{-k}+\max\{qA_j, r\omega_n\} = O(\text{polylog} (n)/ n^{k-1})$. Since there are
$N=(1-\varepsilon)\frac{\binom{n}{k}p}{n/(k-1)}= O(\text{polylog} (n))$ many rounds, by
applying the union bound we obtain that the probability for having
an edge appearing in two loose cycles is bounded from above by $
\binom{n}{k}{N \choose 2} p'^{2}=o(1)$.
\end{proof}

Let $H'$ be the hypergraph on $[n]$ consisting of all the $k$-tuples that are colored in at least one step of the algorithm. For a fixed $k$-tuple $e\subset [n]$ and $1\le i\le N$, let $R^{i}(e) = \{2\le j\le K+1: e\in R^{i}_{j}\}$. Notice that $R^{i}(e)$ has at most one element. For each $k$-tuple $e$, we define the random weight that $e$ accumulates during the algorithm by
\begin{eqnarray}
Q_e &=& 1 - \prod_{i=1}^{N}\left((1 - n^{-k})^{T^{i}_1}\prod_{j\in R^{i}(e)\cap [2, K] }(1-q)^{T^{i}_{j}}\prod_{j\in R^{i}(e)\cap \{K+1\}} (1-r)^{T^{i}_{K+1}}\right )\nonumber\\
&=& \sum_{i=1}^{N}\left(n^{-k}T^{i}_1 +\sum_{j=2}^{K}qT^{i}_{j}\textbf{1}_{j\in R^{i}(e)} + rT^{i}_{K+1}\textbf{1}_{K+1\in R^{i}(e)}\right)(1+o(1)).\label{Q_e}
\end{eqnarray}

For now, we assume that $Q_e\le p$ holds for all $e$ whp.  Under this assumption we demonstrate that $H'$ can be embedded into $H^{(k)}(n, p)$ whp. Intuitively, no $k$-tuple has accrued so much probability mass that its chance of being an edge is greater than $p$.

\begin{lemma}
	If $Q_e \leq p$ holds for all $e$ whp, then there exists a coupling of $H'$ to $H = H^k_{n,p}$ such that $H' \subset H$ whp.
\end{lemma}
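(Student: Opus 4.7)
The plan is to produce an explicit coupling by driving both the coloring algorithm and the random hypergraph $H^k_{n,p}$ from the same source of randomness, one variable per $k$-tuple. Concretely, introduce an i.i.d.\ family $(U_e)_{e \in \binom{[n]}{k}}$ of uniform $[0,1]$ random variables, independent of any other auxiliary randomness used by the algorithm. Define $H$ by declaring $e \in H$ iff $U_e \le p$; independence and uniformity of the $U_e$ then yield $H \sim H^k_{n,p}$ on the nose.

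Next, I re-run the algorithm so that each coloring decision involving $e$ depends only on $U_e$. For fixed $e$, enumerate in temporal order all coloring attempts applied to $e$: these are the attempts in step $(i,1)$ of every round $i$, together with the attempts in step $(i,j)$ whenever $e \in R^i_j$ for some $2 \le j \le K+1$. Let $p^{(e)}_t \in \{n^{-k},q,r\}$ denote the success probability of the $t$-th such attempt, and set
\[
Q_{e,\le t} \;=\; 1 - \prod_{s \le t}\bigl(1 - p^{(e)}_s\bigr).
\]
Declare $e$ to be colored at the $t$-th attempt precisely when $Q_{e,\le t-1} < U_e \le Q_{e,\le t}$. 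A one-line telescoping computation shows that, conditional on $e$ not having been colored before attempt $t$ and on the full history of the algorithm up to that point, the conditional probability of coloring $e$ at the $t$-th attempt equals $p^{(e)}_t$. Since the $U_e$ are mutually independent, the joint distribution of all coloring events agrees with that of the original algorithm. In particular, $e$ is ever colored if and only if $U_e \le Q_e$, where $Q_e$ is the accumulated weight defined in \eqref{Q_e}.

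Granted this coupling, the conclusion is immediate: on the event $\{Q_e \le p \text{ for every } e\}$, which has probability $1-o(1)$ by hypothesis, every $e \in H'$ satisfies $U_e \le Q_e \le p$ and therefore $e \in H$. This gives $H' \subseteq H$ whp, as required. The only point that requires care is that the ordered list of attempts on $e$ and the sequence $(p^{(e)}_t)$ are themselves random, depending on the colorings of $k$-tuples $e' \ne e$; however, those depend only on $(U_{e'})_{e' \ne e}$, so after conditioning on the history the schedule is frozen and $U_e$ remains uniform on $(Q_{e,\le t-1},1]$ given that $e$ has not yet been colored. I do not foresee a genuine obstacle beyond carefully tracking this conditional independence.
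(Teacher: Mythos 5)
Your overall strategy---one uniform variable $U_e$ per $k$-tuple, with $H$ defined by $U_e \le p$ and the algorithm's queries answered by consuming successive subintervals of $[0,1]$ whose lengths match the query probabilities---is exactly the paper's construction, and your partial sums $Q_{e,\le t}$ coincide with the paper's recursively defined $S_e(t)$. The gap is in the assertion that ``the joint distribution of all coloring events agrees with that of the original algorithm.'' In the actual algorithm a $k$-tuple can be queried again \emph{after} it has already been successfully colored: every tuple is queried in step $(i,1)$ of every round, and a tuple colored in an earlier round may later belong to some $R^{i'}_{j}$; each such query succeeds independently with its stated probability (the paper explicitly allows multiple exposures of the same tuple). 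Your rule ``$e$ is colored at attempt $t$ iff $Q_{e,\le t-1} < U_e \le Q_{e,\le t}$'' forces $e$ to be colored at most once in the entire run, since once $U_e \le Q_{e,\le t}$ every later attempt on $e$ automatically fails. So after the first success on $e$ your process answers subsequent queries with probability $0$ instead of $p^{(e)}_t$, and the law of the algorithm's trajectory---hence of the colored hypergraph you build---is not that of $H'$: for instance, in a later step the distribution of which tuple in $R^{i'}_{j}$ becomes the next path edge is biased away from previously colored tuples. What you have constructed is therefore not a coupling of $H'$ to $H$.

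The paper closes precisely this hole: when $U_{e_t} < S_{e_t}(t-1)$ (i.e., the interval allotted to $e_t$ has been exhausted), the query is answered by a fresh $q_t$-coin independent of everything else. This restores the correct conditional success probability at \emph{every} query, while preserving the key property that $e$ is colored at least once iff $U_e \le Q_e$, which is all the final step needs. With that one amendment your argument is complete. (Alternatively one could try to argue that re-colorings occur with probability $o(1)$ and absorb the discrepancy into the ``whp,'' but that requires an additional total-variation and coupling-gluing argument that you have not supplied, and it is strictly more work than the coin-flip fix.)
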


\begin{proof}
	We introduce independent random variables, $U_e$, uniform on $[0,1]$ for each $k$-tuple.  Let $H$ be the random hypergraph in which a $k$-tuple is an edge if $U_e \leq p$.  Observe that $H$ is distributed as $H^k_{n,p}$.

Next, we construct a copy $H''$ of $H'$ such that $H''\subset H$ whp. Note that the algorithm in Section \ref{algorithm} consists of a series of queries, each of which questions whether a certain $k$-tuple will be assigned a color with a particular probability of success. For notational convenience, we enumerate these queries by $1, 2, \dots$ in the order that they are made. To construct $H''$, we will use the $U_e$'s and some independent coin flips to answer these queries. We will recursively define the following partial sums, $\{S_e(t)\}_{t = 0, 1, \dots, }$, which keep track of the query probabilities.

Start the algorithm with $S_e(0) = 0$ for all $k$-tuples $e$. Assume that the algorithm is going to make the $t$-th query and the $S_e(t-1)$ are already defined for all $e$. Assume that the $t$-th query questions whether the $k$-tuple $e_t$ receives a certain color with probability $q_t$ of success. We set $S_e(t)=S_e(t-1)$ for $e\neq e_t$ and set $S_e(t) = S_e(t-1)+q_t(1-S_e(t-1))$ for $e = e_t$. Consider two cases, $U_{e_t}< S_{e_t}(t-1)$ and $U_{e_t}\ge S_{e_t}(t-1)$. In the former, toss a $q_t$-coin independent of all previous random variables to decide the result of the $t$-th query. In the latter, the $t$-th query returns a success if and only if $U_{e_t} \in [S_{e_t}(t-1), S_{e_t}(t)]$. Note that in either case, conditioned on the previous queries, the $t$-th query returns success with probability $q_t$.

A $k$-tuple $e$ is said to be an edge in $H''$ if it is successfully colored in at least one query during the algorithm. Observe that
\begin{itemize}
\item  $H''$ has the same distribution as $H'$,
\item at the last query of the algorithm, $S_e = Q_e$ for all $e$,
\item a $k$-tuple $e$ is colored if and only if $U_e \leq S_e(t)$ for some $t$, or equivalently, $U_e\le Q_e$.
\end{itemize}

Since whp, $Q_e \leq p$ for all $e$, we have $H' \subset H$ whp, proving the desired result.
\end{proof}
To show that whp $Q_e\le p$ for all $e$, we first show that it holds in expectation.
\begin{lemma}\label{expc-qe} If $\alpha = o\left (\frac{1}{\omega_n^{2}\log n}\right )$ and $qn^{k-1} = o(1)$, then $\E (Q_e) \in [p(1-2\varepsilon), p(1-\varepsilon/2)]$ when $n$ is sufficiently large.
\end{lemma}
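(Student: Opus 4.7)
The plan is to directly compute each of the three summations appearing in the expression \eqref{Q_e} for $Q_e$, treating the middle sum as the main contribution and showing the other two are $o(p)$.

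First I would dispose of the step-$1$ and step-$(K+1)$ contributions. Since $T^{i}_1 \le \omega_n$ deterministically, the step-$1$ contribution to $\E(Q_e)$ is at most $Nn^{-k}\omega_n = O(\text{polylog}(n)/n^k) = o(p)$. For step $K+1$, note $|R^{i}_{K+1}| \le \binom{\alpha n+1}{k}+2\binom{\alpha n+1}{k-1}$, and the permutation symmetry of the algorithm gives $\P[e \in R^{i}_{K+1}] = \E|R^{i}_{K+1}|/\binom{n}{k} = O(\alpha^k + \alpha^{k-1}/n)$ uniformly in $e$. Combined with $T^{i}_{K+1}\le \omega_n$, the value of $r=\omega_n\log n/(\alpha^{k-1}n^{k-1})$, and $N = \Theta(pn^{k-1})$, the step-$(K+1)$ contribution is $O(p\omega_n^2 \alpha \log n) = o(p)$, which is exactly where the hypothesis $\alpha = o(1/(\omega_n^2\log n))$ enters.

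For the main term $\sum_{i=1}^{N}\sum_{j=2}^{K}q\,\E[T^{i}_j\,\mathbf{1}_{j\in R^i(e)}]$, I would factor each summand as $q\,\P[e\in R^{i}_j]\cdot\E[T^{i}_j\mid e\in R^{i}_j]$ and estimate each piece. Permutation invariance of the algorithm gives $\P[e\in R^{i}_j] = (1-o(1))|R^{i}_j|/\binom{n}{k}$, with the $1-o(1)$ absorbing the negligible probability of round failure prior to step $j$ (cf.\ Lemma~\ref{round-success}). Conditional on $e\in R^{i}_j$, the set $R^{i}_j$ has deterministic size and $T^{i}_j$ is distributed as $\min(G,A_j)$ for a geometric variable $G$ with success probability $p_j := 1-(1-q)^{|R^{i}_j|}$. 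Using $qn^{k-1}=o(1)$, $p_j = (1+o(1))q|R^{i}_j|$, and the choice \eqref{set-Aj} of $A_j$ forces $A_jp_j = (2+o(1))\log n$, so $\E[\min(G,A_j)] = (1-(1-p_j)^{A_j})/p_j = (1+O(n^{-2}))/p_j$, giving $\E[T^{i}_j \mid e\in R^{i}_j] = (1+o(1))/(q|R^{i}_j|)$.

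Combining, each inner summand collapses to $(1+o(1))/\binom{n}{k}$. Summing over $j=2,\dots,K$ and $i=1,\dots,N$, and substituting $N = (1-\varepsilon)\binom{n}{k}p/(n/(k-1))$ together with $K(k-1) = (1-\alpha)n - 1$, yields a main term equal to $(1-\varepsilon)(1-\alpha)(1+o(1))p = (1-\varepsilon)p + o(p)$. Adding the two negligible contributions gives $\E(Q_e) = (1-\varepsilon)p + o(p)$, which lies in $[(1-2\varepsilon)p,(1-\varepsilon/2)p]$ for all sufficiently large $n$. The main obstacle will be making the permutation-symmetry claim rigorous: one must verify that, despite the random choice of initial edge, the random orderings of subsequent edges, and the possibility of earlier round failures, the joint distribution of the path state after $j-1$ steps (including the identity of the current endpoint $v^{i}_{(j-1)(k-1)+1}$) is invariant under the symmetric group on $[n]$ --- this is what forces $\P[e\in R^{i}_j]$ to be independent of $e$ and underpins both the main-term calculation and the step-$(K+1)$ estimate.
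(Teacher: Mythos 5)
Your proposal is correct and follows essentially the same route as the paper's proof: both isolate the step-$1$ and step-$(K{+}1)$ contributions as $o(p)$ (the latter via $\alpha=o(1/(\omega_n^2\log n))$), and both reduce the main term to $q\,\P[e\in R^i_j]\cdot\E[T^i_j]$ with $\P[e\in R^i_j]=(1+o(1))|R^i_j|/\binom{n}{k}$ and $\E[T^i_j]=(1+o(1))/(q|R^i_j|)$, so each summand collapses to $(1+o(1))/\binom{n}{k}$ and the sum gives $(1-\varepsilon)p(1+o(1))$. Your write-up is in fact slightly more careful than the paper's in spelling out the truncated-geometric computation, the role of round failures, and the extra $O(\alpha^{k-1}/n)$ term in $\P[e\in R^i_{K+1}]$.
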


From now on, we set $q = \frac{1}{n^{k-1}\log n }$ and $\alpha = \frac{1}{\omega_n^{3}\log n}(1+o(1))$ where we choose the $o(1)$ so that $2(k-1)|\alpha n$.

\begin{proof}
Since the $N$ rounds are independent and identically distributed, by linearity of expectation, we have
\begin{eqnarray}
\frac{\E(Q_{e})}{N} &=& \E \left(n^{-k}T^{1}_1 +\sum_{j=2}^{K}qT^{1}_{j}\textbf{1}_{j\in R^{i}(e)} + rT^{1}_{K+1}\textbf{1}_{K+1\in R^{1}(e)}\right)(1+o(1)).\nonumber
\end{eqnarray}

Notice that in each round, the probability a $k$-tuple is not one of the edges of the long path is
$
\frac{{\alpha n \choose k}}{{n \choose k}} =\Theta(\alpha^k)
$. Hence,

\begin{align*}
\frac{\E(Q_{e})}{N} &= O(\omega_n n^{-k}+ \omega_n\alpha^k r)  + \sum_{j=2}^{K} \E(qT^{1}_j) \frac{k}{n}\left(\frac{n - (k-1)j}{n}\right)^{k-1}(1+o(1))  \\
                        &= o(p/N) + \sum_{j=2}^{K} \left(\frac{1}{{n - (k-1)j \choose k-1}} \right)
                        \frac{k}{n} \left(\frac{n - (k-1)j}{n}\right)^{k-1}(1+o(1))  \\
                        &= \frac{p(1-\varepsilon)}{N}(1+o(1))\in \left[\frac{p}{N}(1-2\varepsilon), \frac{p}{N}(1-\varepsilon/2)\right],
\end{align*}

where in the first equality, we use independence of the variables $T^{1}_j$ and $\textbf{1}_{j\in R^{1}(e)}$ and \newline $\P(j\in R^{1}(e)) = \frac{k}{n} \left(\frac{n - (k-1)j}{n}\right)^{k-1}(1+o(1))$ with $\frac{k}{n} $ being the probability that $e$ contains the last vertex of $E^{1}_{j-1}$ and $\left(\frac{n - (k-1)j}{n}\right)^{k-1}$ being the probability that the remaining $k-1$ vertices in $e$ are not covered by any of the $E^{1}_{1}, \dots, E^{1}_{j-1}$. The condition $qn^{k-1} = o(1)$ guarantees that $q|R^{1}_{j}|=o(1)$ and so $\E T^{1}_{j} = \frac{1}{q|R^{1}_{j}|}(1+o(1))$ explaining the second equality.
\end{proof}

Finally, we show that whp $Q_e\leq p$ for all $e$.  We make use of McDiarmid's concentration inequality  \cite{mcdiarmid1998concentration}.
\begin{theorem}\label{concentration}
Let $X_1, \dots , X_t$ be independent random variables, with $a_k \leq X_k \leq b_k$ for each $k$.  Let $S_t = \sum _{k=1}^{t}X_k$ and let $\mu= \E[S_t]$.  Then for each $\lambda \geq 0$,
$$
\P[|S_t - \mu|\geq \lambda] \leq 2 e^{-2 \lambda^2 / \sum (b_k-a_k)^2}.
$$
\end{theorem}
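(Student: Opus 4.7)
The proof I have in mind is the classical Chernoff--Hoeffding argument. First I would exponentiate and apply Markov's inequality: for any $s>0$,
\[
\P[S_t - \mu \geq \lambda] \leq e^{-s\lambda}\,\E\bigl[e^{s(S_t-\mu)}\bigr] = e^{-s\lambda}\prod_{k=1}^{t} \E\bigl[e^{s(X_k - \E X_k)}\bigr],
\]
where the factorization uses independence of the $X_k$. This reduces everything to bounding the moment generating function of a single bounded, zero-mean random variable.

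The key lemma is Hoeffding's bound: if $Y$ is zero-mean and supported in $[a,b]$, then $\E[e^{sY}] \leq \exp(s^{2}(b-a)^{2}/8)$. I would prove this by writing each $y \in [a,b]$ as a convex combination $y = \theta b + (1-\theta)a$, using convexity of $x \mapsto e^{sx}$ to obtain $e^{sY} \leq \theta e^{sb} + (1-\theta) e^{sa}$ pointwise, and then taking expectations to obtain a function of $s$ whose logarithm is at most $s^{2}(b-a)^{2}/8$ (this follows by bounding the second derivative in $s$, together with the fact that the first derivative at $s=0$ vanishes because $\E Y = 0$). Applied to $Y = X_k - \E X_k$, which lies in an interval of length exactly $b_k - a_k$, this yields $\E[e^{s(X_k - \E X_k)}] \leq \exp(s^{2}(b_k-a_k)^{2}/8)$.

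Combining and optimizing, I would conclude
\[
\P[S_t - \mu \geq \lambda] \leq \exp\!\Bigl(-s\lambda + \tfrac{s^{2}}{8}\sum_{k=1}^{t} (b_k-a_k)^{2}\Bigr),
\]
minimized by the choice $s = 4\lambda \big/ \sum_{k}(b_k-a_k)^{2}$ to give the one-sided tail $\exp(-2\lambda^{2}/\sum_{k}(b_k-a_k)^{2})$. Repeating the same computation for $-X_1,\dots,-X_t$ (which leaves every range length unchanged) and combining the two tails by a union bound supplies the factor of $2$ in the stated two-sided inequality.

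The only non-routine ingredient is Hoeffding's lemma itself; everything else is standard Chernoff bookkeeping. No structural property of the packing algorithm is used at this stage, which is appropriate since this is precisely the black-box concentration tool that will be applied to the decomposition of $Q_e$ in \eqref{Q_e} in order to upgrade the expectation bound of Lemma \ref{expc-qe} to the almost sure inequality $Q_e \leq p$ for every $k$-tuple~$e$.
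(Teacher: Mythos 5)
Your proof is correct: it is the standard Chernoff--Hoeffding argument (Markov on the exponentiated sum, Hoeffding's lemma for the moment generating function of a bounded zero-mean variable, optimization of $s = 4\lambda/\sum_k(b_k-a_k)^2$, and a union bound for the two-sided tail), and the bookkeeping checks out. The paper itself offers no proof of this statement --- it is imported as a black box from McDiarmid's survey \cite{mcdiarmid1998concentration} --- so there is nothing to compare against beyond noting that your argument is the canonical one for exactly this inequality.
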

Observe that in the estimate \eqref{Q_e}, $Q_e$ is approximately the sum of $N$ independent random variables, each of which is bounded by $n^{-k}\omega_n  + r \omega_n+ q\max_{2\le j\le K}\{A_j\} \leq 2 \omega_n^{2}\log n/ \alpha^{k-1}n^{k-1}$.
We know that $\E Q_e \leq (1-\varepsilon/2) p$.
Thus, by Theorem \ref{concentration},
\begin{align*}
\P(Q_e> p)\le \P(|Q_e - \E Q_e| &\geq \varepsilon p /3) \leq 2 \exp \left( -\frac{\varepsilon^2 p^2 \alpha^{2k-2}n^{2k-2}}{18 N \omega_n^{4}\log^2 n} \right) \\
                    &\le n^{-\omega_n}
\end{align*}
if we set $\omega_n = \log ^{1/6k} n$. We take a union bound over all $k$-tuples of vertices to yield the claim and complete the proof of the theorem for the case $\frac{\log^{2k+2} n}{n^{k-1}}\le p\le \frac{2\log^{2k+2} n}{n^{k-1}}$.

In the general case when $p$ can be greater than $2\frac{\log^{2k+2} n}{n^{k-1}}$, let $M=\left\lfloor\frac{p n^{k-1}}{\log^{2k+2} n}\right\rfloor$. Let $H = H^{(k)}(n, p)$. Each edge in $H$ is assigned a number from $1$ to $M$ uniformly and independently at random. Let $H_i$ be the graph consisting of edges assigned number $i$. Then $H_i$ has the same distribution as $H^{(k)}(n, p/M)$. We have shown that each $H_i$ contains $\frac{p {n\choose k}}{M n/(k-1)}(1+o(1))$ disjoint loose cycles whp. By the same argument with Markov's inequality as in the proof of Lemma \ref{round-success}, whp there are $M(1+o(1))$ graphs among the $H_i$'s having the aforementioned property. Since the $H_i$ are edge-disjoint, one can add up the number of loose cycles in each $H_i$ and obtain the desired number of loose cycles in $H$.

\section{Acknowledgements}
The authors would like to thank the anonymous referees for their very helpful comments and suggestions.
\bibliographystyle{plain}
\bibliography{ham_loose_4}
\end{document}